\newtheorem{theorem}{Theorem}
\newcommand{\beeq}{\begin{equation}}
\newcommand{\eneq}{\end{equation}}
\newcommand{\bear}{\begin{eqnarray}}
\newcommand{\enar}{\end{eqnarray}}
\newcommand{\bearno}{\begin{eqnarray*}}
\newcommand{\enarno}{\end{eqnarray*}}
\newcommand{\vol}{{\rm vol}}
\renewcommand\footnotemark{}
\begin{document}

\title{Spectral analysis of communication networks using Dirichlet eigenvalues}

\author[1]{Alexander Tsiatas}
\author[2]{Iraj Saniee\thanks{This work was supported by AFOSR Grant No. FA 9550-08-2-0064.}}
\author[3]{Onuttom Narayan}
\author[2]{Matthew Andrews}
\affil[1]{Department of Computer Science and Engineering, University of California, San Diego, 9500 Gilman Drive, La Jolla, CA 92093-0404}
\affil[2]{Mathematics of Networks, Bell Laboratories, Alcatel-Lucent, 600 Mountain Avenue, Murray Hill, NJ 07974}
\affil[3]{Department of Physics, University of California, Santa Cruz, CA 95064}

\date{}

\maketitle

\begin{abstract}
The spectral gap of the graph Laplacian with Dirichlet boundary conditions is
computed for the graphs of several communication networks at the IP-layer,
which are subgraphs of the much larger global IP-layer network. We show that
the Dirichlet spectral gap of these networks is substantially larger than the
standard spectral gap and is likely to remain non-zero in the infinite graph
limit. We first prove this result for finite regular trees, and show that the
Dirichlet spectral gap in the infinite tree limit converges to the spectral
gap of the infinite tree. We also perform Dirichlet spectral clustering on the
IP-layer networks and show that it often yields cuts near the network core
that create genuine single-component clusters. This is much better than
traditional spectral clustering where several disjoint fragments near the
periphery are liable to be misleadingly classified as a single cluster.
Spectral clustering is often used to identify bottlenecks or congestion;
since congestion in these networks is known to peak at the core, our results
suggest that Dirichlet spectral clustering may be better at finding bona-fide
bottlenecks. 
\end{abstract}

\section{Introduction}
\label{intr}
%For networks to function well, adequate capacity and reliability are
%needed. These properties are hindered by network bottlenecks, which
%impede the free flow of information and, despite their small size,
%can significantly undermine the network if they fail.
%Good network design requires the ability to locate and eliminate
%bottlenecks. There are many existing algorithms for finding bottlenecks,
%but they are not always effective in all contexts.

Many real-world networks are truly vast, encompassing millions or billions
of nodes and edges, e.g., social and biological networks. This scale
produces computational challenges: the large majority of algorithms
are too computationally intensive to use at this scale on general
graphs. Instead, one can study smaller sub-graphs of these networks;
for example, the portion of a social network corresponding to one
university, or the portion of a communication network corresponding
to one Internet service provider. 

Spectral graph theory \cite{book}, the study of eigenvalues and
eigenvectors of graph-theoretic matrices, is often used to analyze
various graph properties.  One might hope that the properties of a
large sub-graph of a network will be representative of the properties
of the entire network. Unfortunately, the properties of an expander
graph depend on the conditions imposed at its (large) boundary. In
particular, the spectral gap of the graph Laplacian on a finite
truncation of an infinite regular tree approaches zero as the size
of the truncation is increased, even though the spectral gap of its
infinite counterpart is non-zero. In this paper we show that, by
contrast, if the spectral gap is calculated with {\it Dirichlet boundary 
conditions,} it approaches the infinite graph limit as the size of 
truncation is increased. 

Motivated by this result, we compute the Dirichlet spectral gap for
ten IP-layer communication networks as measured and documented by
previous researchers in the Rocketfuel database \cite{rf}. We find
that the Dirichlet spectral gap is much larger than the traditional
spectral gap for these graphs. (Traditional spectral clustering
uses the normalized Laplacian matrix $\mathcal{L}$ or some similar
matrix; we use the matrix $\mathcal{L}_D$: the Laplacian restricted
to the rows and columns corresponding to non-boundary nodes.)
Moreover, unlike the traditional spectral gap, it does not trend
downwards for larger networks.  This indicates that the spectral
gap for these networks viewed as sub-graphs of an infinite graph
is finite.

There are precedents for treating networks essentially as subsets of
an an overarching infinite graph; many network generation models
\cite{ba,erdos,ws} exhibit unique convergence properties (to power-law
degree distributions or otherwise) as the size of the network grows
to infinity.  We also note that Dirichlet boundary conditions have
been shown to be successful at mitigating other boundary-related
issues in graph vertex ranking \cite{ctx}.

There is a direct connection between the spectral gap and clustering
in networks, through the Cheeger inequality.  Spectral graph theory
has led to many effective algorithms for finding cuts that result
in a small Cheeger ratio, including spectral clustering \cite{spectral1,
spectral2, survey, survey2} and local graph partitioning algorithms
\cite{acl}. These algorithms have been well-studied, both empirically
\cite{spectral1, spectral2} and theoretically \cite{spectral1,
survey2}.  Unfortunately, these algorithms can also exhibit some
undesirable behavior. It has been shown empirically \cite{lldm}
that the ``best'' partitionings of many networks, as measured by
the Cheeger ratio, result in cutting off nodes or subtrees near the
boundary of the network.  The resulting `clusters' near the boundary
actually consist of several disjoint fragments. Especially when
viewed as subsets of larger networks, this kind of clustering is
not particularly meaningful.

In this paper, we use {\it Dirichlet spectral clustering} to identify
good cuts in the networks in the Rocketfuel database. We use the
top two eigenvectors of $\mathcal{L}_D,$ the graph Laplacian with
Dirichlet boundary conditions, to cut the network into two sections.
We demonstrate that, compared to traditional spectral clustering,
there is a substantial reduction in the average number of components
resulting from the cut, without a significant increase in the Cheeger
ratio. Instead of finding  cuts near the boundaries of the networks, 
Dirichlet spectral clustering obtains cuts in the network core.

The Cheeger ratio of a cut is a well known indicator of the congestion
across the cut; small Cheeger ratios are likely to be associated
with bottlenecks.  The emphasis on identifying core bottlenecks
becomes more critical in the light of the recent observation that
many real-world graphs exhibit large-scale curvature \cite{jlb,ns}.
It has been shown \cite{ns, jlb} that such global network curvature
leads to core bottlenecks with load (or betweenness) asymptotically
much worse than flat networks, where ``load'' means the the maximum
total flow through a node assuming unit traffic between every
node-pair along shortest paths \cite{ns}. As such, it is important
to find and characterize bottlenecks at the core rather than the
fringes, where they do not matter as much. Our observations, suggest
that Dirichlet spectral clustering may be more useful in this regard.

The rest of this paper is structured as follows: in Section \ref{formal},
we give the theoretical justification for using Dirichlet eigenvalues
\cite{chung} instead of the traditional spectrum for analyzing and
clustering finite portions of infinite graphs. In Section \ref{RF},
we then compare the spectral gap using Dirichlet eigenvalues to the
traditional spectral gap on real, publicly-determined network topologies
\cite{rf} that represent smaller portions of the wider telecommunications
grid.  In Section \ref{decomp}, we demonstrate how Dirichlet spectral
clustering finds graph partitions that are more indicative of bottlenecks
in the network core rather than the fringes.

\section{Spectrum of Finite Trees: Motivation for Dirichlet Spectral Clustering}
\label{formal}
Throughout this paper, we analyze general undirected connected graphs
$G$ by using the normalized graph Laplacian $\mathcal{L}$, defined as
in \cite{book}. For two vertices $x$ and $y$, the corresponding matrix
entry is:
\[
\mathcal{L}_{xy} =
\begin{cases}
1 & \textrm{if } x = y, \\
-\frac{1}{\sqrt{d_x d_y}} & \textrm{if } x \textrm{ and } y \textrm{ are adjacent, and} \\
0 & \textrm{otherwise},
\end{cases}
\]
where $d_x$ and $d_y$ are the degrees of $x$ and $y$. We denote by
$\lambda$ the {\it spectral gap}, which is simply the smallest nonzero
eigenvalue of $\mathcal{L}$.

For any graph $G$ and finite subgraph $S \subset G$, the {\it Cheeger
ratio} $h(S)$ is a measure of the cut induced by $S$:
\[
h(S) = \frac{e(S, \bar{S})}{\min(\vol(S), \vol(\bar{S}))}.
\]
We use $e(S,\bar{S})$ to denote the number of edges crossing from $S$
to its complement, and the {\it volume} $\vol(S)$ is simply the sum of
the degrees of all nodes in $S$. The {\it Cheeger constant} $h$ is the
minimum $h(S)$ over all subsets $S$. The Cheeger constant and spectral
gap are related by the following {\it Cheeger inequality} \cite{book}:
\[
2h \geq \lambda \geq \frac{h^2}{2}.
\]
Both $\lambda$ and $h$ are often used to characterize expansion or
bottlenecks in graphs. This inequality shows that they are both good
candidates and gives the ability to estimate one based on the other.

For the infinite $d$-regular tree, the spectral gap and Cheeger constant
have both been analytically determined \cite{friedman,mckay}. Using
$\mathcal{L}$, the spectral gap is
\beeq \label{tree_spectral_gap}
\lambda = 1 - \frac{2}{d} \sqrt{d-1},
\eneq
and the Cheeger constant is $h = d-2$ \cite{hjl}.  Both of these values
are nonzero, indicating good expansion. However, the Cheeger ratio for
truncated $d$-regular trees ($TdT$) -- those with all branches of the
infinite tree cut off beyond some radius $r$ from the center -- approaches
zero as the tree gets deeper.  By cutting off any one subtree $S$ from
the root, there is only one edge connecting $S$ to $\bar{S}$, and as the
tree gets deeper, this ratio gets arbitrarily small. Using the Cheeger
inequality, it follows that the $\lambda_{TdT}\to0$ as $r\to \infty$.
Thus, the standard spectral properties of finite trees do not approach
the infinite case as they get larger; in fact, they suggest the opposite.
This is problematic when making qualitative observations about networks
and their expansion, necessitating another tool for spectral analysis
of networks.

The main reason why the traditional spectral gap does not capture
expansion well in large, finite trees is the existence of a boundary. This
is also problematic in network partitioning algorithms; often times the
``best'' partition is a {\it bag of whiskers} or combination of several
smaller cuts near the boundary \cite{lldm}. In this paper, we will use
Dirichlet eigenvalues to eliminate this problem.

Dirichlet eigenvalues are the eigenvalues of a truncated matrix,
eliminating the rows and columns that are associated with nodes on the
graph boundary. We will use a truncated normalized graph Laplacian,
$\mathcal{L}_D$, a submatrix of $\mathcal{L}$. This is different
from simply taking the Laplacian of an induced subgraph, as the edges
leading to the boundary nodes are still taken into account; it is only
the boundary nodes themselves that are ignored.  We define the {\it
Dirichlet spectral gap} to be the smallest eigenvalue of $\mathcal{L}_D.$

Using Dirichlet eigenvalues, it is also possible to obtain a {\it local
Cheeger inequality} \cite{chung} for the sub-graph $S.$ First, the {\it
local Cheeger ratio} is defined \cite{chung} for a set of nodes $T\subset
S$ as
\[
H(T) = \frac{e(T,\bar{T})}{\vol(T)};
\]
because the boundary nodes are excluded from $S$ in the definition of \cite{chung}, the
set $T$ cannot contain any boundary nodes of $S.$ The local Cheeger
ratio $H(T)$ is the appropriate quantity when $S$ is a sub-graph of a
larger graph. The {\it local Cheeger constant} $h_S$ for $S$ is then
defined as the minimum of $H(T)$ for all $T \subset S \setminus \partial(S).$
The local Cheeger inequality obtained in \cite{chung} is
\[
h_S \geq \lambda_S \geq \frac{h_S^2}{2},
\]
where $\lambda_S$ is the Dirichlet eigenvalue of the Laplacian restricted
to the rows and columns corresponding to nodes in $S$. This inequality
indicates a relationship between local expansion and bottlenecks.

The use of Dirichlet eigenvalues requires that the boundary of the
graph $S$ be defined. If $S$ is a tree, the leaf nodes are a natural
choice. When $S$ is actually a finite truncation of a larger graph,
the boundary can be defined as the set of nodes that connect directly
to other nodes outside the truncation; for the Rocketfuel data~\cite{rf},
we will use the nodes with degree $1$ which presumably connect outside
of the subnetwork.

\begin{figure}[Hht]
\begin{center}
\includegraphics[width=4in]{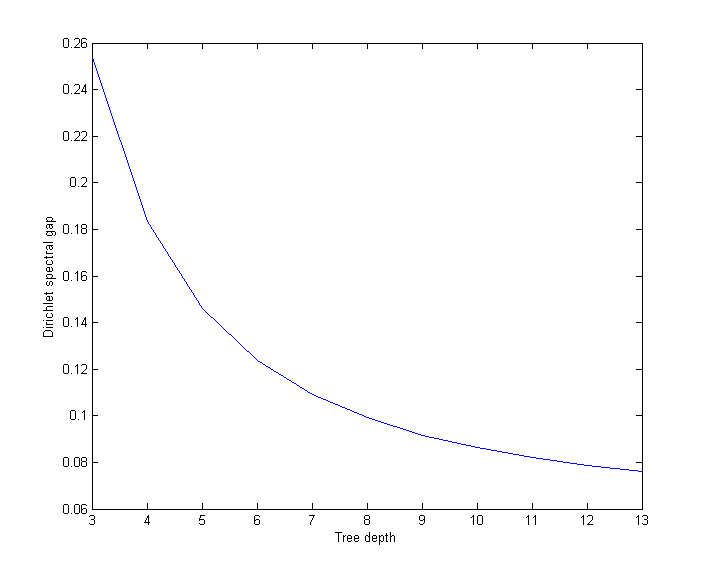} %meanchi_asym}
\caption{Dirichlet spectral gap for successively larger $3$-regular trees, showing convergence to a
nonzero value. \label{fig:convergence}}
\end{center}
\end{figure}

We first use Dirichlet eigenvalues on $d$-regular trees as prototypical
evidence for their effectiveness in capturing true spectral properties
on real-world networks. There is empirical evidence in Figure
\ref{fig:convergence}, showing that the Dirichlet spectral gap for
$3$-regular trees indeed converges to a nonzero value as tree depth
increases, contrasting with the traditional spectral gap which converges
to zero.  This is made rigorous in the following theorem:

\begin{theorem}
For finite $d$-regular trees of depth $L$, the Dirichlet spectral gap
converges to the true spectral gap (\ref{tree_spectral_gap}) of the
infinite tree as $L$ approaches infinity.
\end{theorem}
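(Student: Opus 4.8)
The plan is to reduce the statement to the convergence of a single number — the largest adjacency eigenvalue of a nested family of finite balls in the infinite $d$-regular tree $T_\infty$ — and then to trap that eigenvalue between two matching bounds. First I would make $\mathcal{L}_D$ explicit. A finite $d$-regular tree of depth $L$ is the ball $B_L$ of radius $L$ about a vertex of $T_\infty$; its boundary is the set of leaves at distance $L$, and every non-boundary vertex still has degree exactly $d$ (a vertex at distance $L-1$ keeps its $d-1$ neighbours at distance $L$). Hence on the non-boundary set — which is the ball $B_{L-1}$ — all diagonal entries of $\mathcal{L}$ equal $1$ and all off-diagonal adjacent entries equal $-1/\sqrt{d\cdot d}=-1/d$, so $\mathcal{L}_D = I-\tfrac1d A_{L-1}$, where $A_{L-1}$ is the adjacency matrix of $B_{L-1}$ (and, balls in trees being induced subgraphs, this is just the restriction to $B_{L-1}$ of the adjacency operator of $T_\infty$). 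Therefore the Dirichlet spectral gap of $B_L$ equals $1-\tfrac1d\mu_{L-1}$, with $\mu_{L-1}$ the largest eigenvalue of $A_{L-1}$; since the target is $\lambda = 1-\tfrac2d\sqrt{d-1}$ by (\ref{tree_spectral_gap}), it suffices to prove $\mu_{L-1}\to 2\sqrt{d-1}$ as $L\to\infty$.

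For the upper bound, extend any unit eigenvector of $A_{L-1}$ by zero to all of $T_\infty$: because $B_{L-1}$ is an induced subgraph, the Rayleigh quotient is unchanged, hence at most the supremum of the spectrum of the adjacency operator of $T_\infty$, which is $2\sqrt{d-1}$ — the adjacency-operator form of (\ref{tree_spectral_gap}) (equivalently, via the Kesten--McKay spectral measure, the spectrum of that operator is $[-2\sqrt{d-1},2\sqrt{d-1}]$). Thus $\mu_{L-1}\le 2\sqrt{d-1}$, which already yields the expected inequality ``Dirichlet spectral gap $\ge\lambda$''. For the matching lower bound I would use the radial test vector $x(u)=(d-1)^{-|u|/2}$ on $B_{L-1}$, where $|u|$ is the distance to the root — the natural truncation of the spherical function at the edge of the spectrum. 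Using the level sizes $n_0=1$, $n_k=d(d-1)^{k-1}$ for $k\ge1$, and that each level-$(k+1)$ vertex is joined to level $k$ by exactly one edge, a direct count gives $\|x\|^2=1+\tfrac{d}{d-1}(L-1)$ and $\langle x,A_{L-1}x\rangle=\tfrac{2d}{\sqrt{d-1}}(L-1)$, so the Rayleigh quotient equals $2\sqrt{d-1}\big/\bigl(1+\tfrac{d-1}{d(L-1)}\bigr)\to 2\sqrt{d-1}$. Hence $\mu_{L-1}\ge 2\sqrt{d-1}-o(1)$, and combining the two bounds, $\mu_{L-1}\to 2\sqrt{d-1}$, i.e.\ the Dirichlet spectral gap converges to $\lambda$. (An alternative to the lower bound: reduce $A_{L-1}$ to radial functions, obtaining a tridiagonal operator whose top eigenvalue is governed by the spherical-function recursion; or simply note that $\mu_{L-1}$ is nondecreasing in $L$ and use the test vector only to identify the limit.)

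I do not expect a deep obstacle; the arithmetic above is routine. The step that genuinely needs care is the reduction: one must verify that removing the leaves leaves every surviving vertex with degree exactly $d$, so that $\mathcal{L}_D$ is \emph{exactly} $I-\tfrac1d A_{L-1}$ rather than a perturbation of it, and that $B_{L-1}$ sits inside $T_\infty$ as an induced subgraph so that Rayleigh quotients transfer verbatim — both facts are special to the regular-tree situation and are precisely what makes the infinite-limit identity come out clean. The only other point worth a careful sentence (not an obstacle) is that the single number $2\sqrt{d-1}$ plays two roles — the top of the spectrum of the adjacency operator of $T_\infty$ furnishing the upper bound, and the limiting Rayleigh quotient of the radial vector furnishing the lower bound — and their agreement is exactly what forces convergence to $\lambda$ and not merely to some value $\ge\lambda$.
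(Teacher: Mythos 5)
Your proposal is correct, but it takes a genuinely different route from the paper's. You first observe the structural identity $\mathcal{L}_D = I - \tfrac{1}{d}A(B_{L-1})$ (valid because every non-boundary vertex of the depth-$L$ tree has degree exactly $d$ and the interior is an induced subtree), and then squeeze the top adjacency eigenvalue $\mu_{L-1}$: from above by extending eigenvectors by zero into the infinite tree and quoting the Kesten--McKay value $2\sqrt{d-1}$ for the top of the adjacency spectrum (the adjacency-operator form of (\ref{tree_spectral_gap})), and from below by the radial test vector $(d-1)^{-|u|/2}$, whose Rayleigh quotient you compute correctly as $2\sqrt{d-1}\big/\bigl(1+\tfrac{d-1}{d(L-1)}\bigr)$. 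The paper instead diagonalizes $\mathcal{L}_D$ explicitly: it imposes azimuthal symmetry, solves the resulting three-term recurrence with characteristic roots $(d-1)^{-1/2}e^{\pm i\alpha}$, obtains the transcendental eigenvalue condition (\ref{tantan}) together with the families $\sin((L+1-k)\alpha)=0$ for eigenvectors vanishing down to level $k$, and --- crucially --- counts the eigenvalues so found to verify the entire spectrum has been captured before letting the smallest $\alpha\to 0$. Your variational argument is shorter, sidesteps that completeness/counting step (the delicate part of the paper's proof), and yields as byproducts the bound that the Dirichlet gap is at least $\lambda$ for every finite $L$ together with an explicit $O(1/L)$ convergence rate; on the other hand, it imports the infinite-tree spectral radius as an external input, whereas the paper's computation produces the limiting constant on its own (and, in addition, the complete Dirichlet spectrum with degeneracies), invoking (\ref{tree_spectral_gap}) only to identify the limit with the infinite-tree gap. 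Since the paper states (\ref{tree_spectral_gap}) as a known result, your reliance on it is legitimate.
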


\begin{proof}
To derive the Dirichlet spectral gap for finite trees using the leaves
as the boundary, we will solve a recurrence that arises from the tree
structure and the standard eigenvalue equation
\beeq \label{ee}
\mathcal{L}_d \vec{x} = \lambda \vec{x}.
\eneq
Let $T$ be a $d$-regular tree of depth $L+1$; the $(L+1)$st level is the
boundary. We first consider eigenvectors $\vec{x}$ which have the same
value at every node at the same depth within $T$; these eigenvectors are
azimuthally symmetric.  We can represent each such eigenvector $\vec{x}$
as a sequence of values $(x_0, x_1, \ldots, x_L)$, where $x_i$ is the
uniform value at all nodes at depth $i$, similar to the analysis of
the infinite-tree spectral gap appearing in \cite{friedman}. Using this
eigenvector form for $\vec{x}$ in (\ref{ee}) leads to the recurrence:
\beeq \label{recurrence}
x_i - \frac{1}{d}x_{i-1} - \frac{d-1}{d}x_{i+1} = \lambda x_i, 2 \leq i \leq L.
\eneq
At the leaves of the tree, we have the Dirichlet boundary condition:
\beeq \label{boundary_leaf}
x_{L+1} = 0,
\eneq
and at the root of the tree we have the boundary condition
\beeq \label{type1}
x_0 - x_1 = \lambda x_0.
\eneq
We can solve (\ref{recurrence}) using the characteristic equation:
\[
\frac{d-1}{d}r^2 - (1-\lambda)r + \frac{1}{d} = 0,
\]
whose roots can be written as
\beeq \label{roots}
r_{1,2} = \frac{1}{\sqrt{d-1}} e^{\pm i \alpha}
\eneq
with
\beeq \label{dirichlet_gap}
\lambda = 1 - \frac{2}{d}\sqrt{d-1} \cos \alpha.
\eneq
Since $\lambda$ has to be real, either the real or imaginary part of
$\alpha$ must be zero.  Substituting the first boundary condition
(\ref{boundary_leaf}) yields a solution to (\ref{recurrence}) with
the form
\beeq \label{solution_form}
x_n = A (r_1^{n - L -1} - r_2^{n- L -1}).
\eneq
for some constant $A$ and $r_{1,2}$ given in (\ref{roots}). Using (\ref{type1}), 
the condition for eigenvalues is 
\begin{equation}
\frac{\tan\alpha}{\tan (L + 1)\alpha} = - \frac{d - 2}{d} \qquad 0 < \alpha < \pi.
\label{tantan}
\end{equation}
Since $\tanh x /\tanh (L + 1) x$ is positive for all real $x,$ there are 
no imaginary solutions to Eq.(\ref{tantan}). Therefore all the $L+1$ solutions
are real. From Eq.(\ref{dirichlet_gap}), the corresponding $L+1$ eigenvalues
are all outside the infinite-tree spectral gap.

We now consider eigenvectors which are zero at all nodes up to the $k$'th
level with $L > k \geq 0.$ The eigenvector is non-zero at two daughters of
some $k$'th level node and the descendants thereof.  We assume azimuthal
symmetry inside both these two sectors. The eigenvalue condition for the
parent node at the $k$'th level forces the eigenvector to be opposite in
the two sectors.  Inside each sector, (\ref{recurrence}), (\ref{roots}),
(\ref{dirichlet_gap}) (\ref{boundary_leaf}) and (\ref{solution_form})
are still valid. However, (\ref{type1}) is replaced by the condition
$x_k = 0,$ from which $\sin (L + 1 - k) \alpha = 0.$ There are $L - k$
real solutions to this equation, corresponding to eigenvalues that lie
outside the infinite-tree spectral gap, each with degeneracy $d^k (d -1).$
The total number of eigenvalues we have found so far is 
\begin{equation}
L + 1 + \sum_{k=0}^{L-1} d^k (d - 1) (L - k) = \frac{d^{L+1} - 1}{d - 1}
\end{equation}
i.e. we have found all the eigenvalues. As $L$ gets larger, the smallest 
$\alpha$ approaches $0,$ showing that the Dirichlet spectral gap converges
to the spectral gap of the infinite tree (\ref{tree_spectral_gap}) as the 
depth approaches infinity.
\end{proof}

This derivation shows that Dirichlet eigenvalues capture the expansion
properties of trees much better than the traditional spectral gap which
has been shown to approach zero for large finite trees.
This behavior on trees suggests that Dirichlet eigenvalues are a good
candidate for use in analyzing real-world networks. Such analysis appears
in Section \ref{RF}.

\section{Spectrum of Rocketfuel Networks}
\label{RF}
Our research is motivated by a series of datasets representing portions
of network topologies using Rocketfuel \cite{rf}.  Rocketfuel datasets
are publicly-available, created using {\tt traceroute} and other
networking tools to determine portions of network topology corresponding
to individual Internet service providers.  Even though like most measured
datasets, the Rocketfuel networks are not free of errors (see for example
\cite{teix}), they provide valuable connectivity information at the
IP-layer of service provider networks across the globe.  Because the
datasets were created in this manner, they represent only subsets of
the vast Internet; it becomes impossible to determine network topology
at certain points. For example, corporate intranets, home networks,
other ISP's, and network-address translation cannot be explored. The
networks used range in size from 121 to 10,152 nodes.

\begin{table}
\begin{center}
\begin{tabular}{|c|c|c|c|c|}
\hline
Dataset ID & Nodes & Edges & Traditional spectral gap & Dirichlet spectral gap \\
\hline
1221 & 2998 & 3806 & 0.00386 & 0.07616 \\
\hline
1239 & 8341 & 14025 & 0.01593 & 0.03585 \\
\hline
1755 & 605 & 1035 & 0.00896 & 0.09585 \\
\hline
2914 & 7102 & 12291 & 0.00118 & 0.04621 \\
\hline
3257 & 855 & 1173 & 0.01045 & 0.04738 \\
\hline
3356 & 3447 & 9390 & 0.00449 & 0.05083 \\
\hline
3967 & 895 & 2070 & 0.00799 & 0.03365 \\
\hline
4755 & 121 & 228 & 0.03570 & 0.06300 \\
\hline
6461 & 2720 & 3824 & 0.00639 & 0.11036 \\
\hline
7018 & 10152 & 14319 & 0.00029 & 0.09531 \\
\hline
Grid & 10000 & 19800 & 0.00025 & 0.00050 \\
\hline
\end{tabular}
\caption{Structural and spectral properties of Rocketfuel datasets. \label{table:rf}}
\end{center}
\end{table}

Because of the method of data collection, the Rocketfuel datasets
contain many degree-$1$ nodes that appear at the edge of the topology.
In actuality, the network extends beyond this point, but the datasets
are limited to one ISP at a time.  As such, it makes sense to view
these degree-$1$ nodes as the boundary of a finite subset of a much
larger network. Using this boundary definition, we compute the
Dirichlet spectral of these graphs and compare with their standard
counterparts, as shown in Table \ref{table:rf} and Figure~\ref{fig:gaps}.  It is apparent
that the Dirichlet spectral gaps are much larger than the traditional
spectral gaps for all the networks, implying a much higher degree
of expansion than one would traditionally obtain.
The spectral gaps for a two-dimensional square
Euclidean grid are also shown; the grid is known to be a poor expander, and accordingly
even the Dirichlet spectral gap is very small.
\begin{figure}[Hht]
\begin{center}
\includegraphics[width=4in]{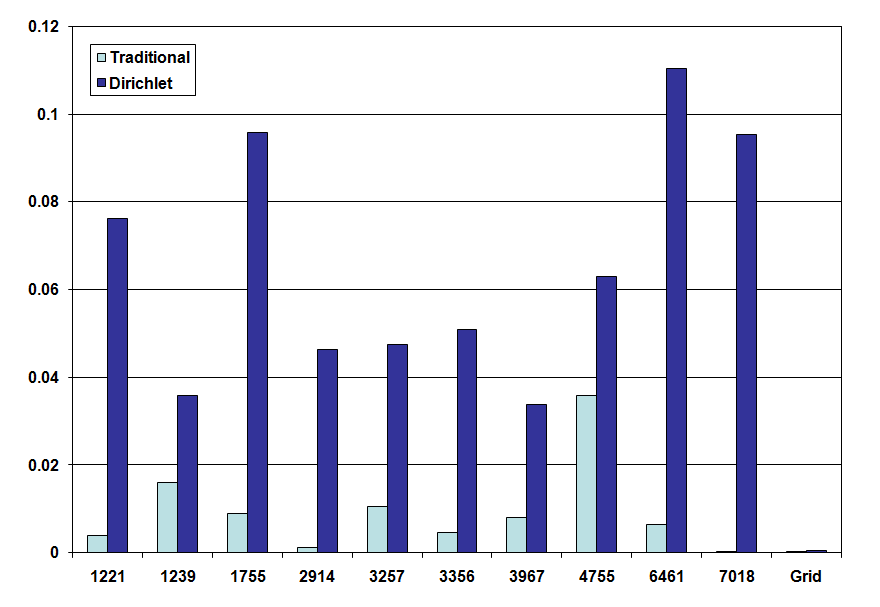} %meanchi_asym}
\caption{Comparison of traditional and Dirichlet spectral gaps in Rocketfuel data as well as the
$2$-dimensional Euclidean grid.
\label{fig:gaps}}
\end{center}
\end{figure}

Figure~\ref{fig:lngaps} shows the same data, plotted as a function of the 
number of nodes $N$ in each network. We see that the traditional spectral
gap keeps decreasing as $N$ is increased, whereas the Dirichlet spectral
gap does not.

\begin{figure}[Hht]
\begin{center}
\includegraphics[width=4in]{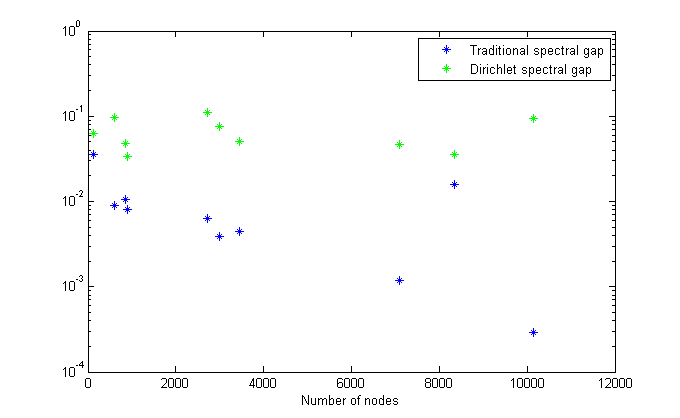} %meanchi_asym}
\caption{Comparison of traditional and Dirichlet spectral gaps across Rocketfuel networks.
\label{fig:lngaps}}
\end{center}
\end{figure}

Since Figure~\ref{fig:lngaps} compares different networks, possibly with different 
properties, we confirm the result by computing the spectral gap for subgraphs
of different sizes drawn from a single network. All the nodes that are within
a distance $r$ of the center of mass of a network are included in a subgraph,
with $r$ varying between 1 and the maximum possible value for the network. In 
Fig.~\ref{fig:growing} shows the results for the largest of the Rocketfuel
networks, dataset 7018 containing over 10,000 nodes. 
For a subgraph of radius $r$, the boundary
is defined as all the nodes which i) have edges connecting them to nodes in 
the graph that are outside the subgraph or ii) connect to the outside world, 
i.e. that have degree 1 in the full dataset. As in Fig.~\ref{fig:lngaps}, in 
Fig.~\ref{fig:growing}, the traditional spectral gap keeps decreasing as $r$
is increased, but the Dirichlet spectral gap does not.
\begin{figure}[Hht]
\begin{center}
\includegraphics[width=4in]{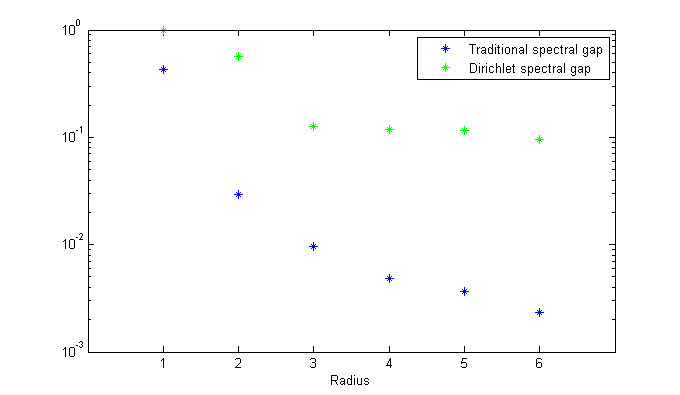} %meanchi_asym}
\caption{Comparison of traditional and Dirichlet spectral gaps in successively larger subgraphs,
grown from the center of mass of dataset 7018.
\label{fig:growing}}
\end{center}
\end{figure}

\section{Spectral Decomposition}
\label{decomp}
One important application of the eigendecomposition of a graph is spectral
clustering or partitioning \cite{spectral1,spectral2}.  The problem is
to group the nodes into partitions, clusters, or communities that are
inherently well-connected within themselves, with sparser connections
between clusters. This is closely related to finding bottlenecks; if a
graph has a bottleneck, then a good partition is often found by dividing
the graph at the bottleneck. See \cite{survey} for a general survey of
graph clustering.

It is often desirable for a network partition to be balanced, and
finding bottlenecks near the core or center of mass of a network is
often more useful than simply clipping small subsets of nodes near the
boundary. But according to \cite{lldm}, using the Cheeger ratio as a
metric on real-world data, the ``best'' cuts larger than a certain
critical size are actually ``bags of whiskers'' or combinations of
numerous smaller cuts. Because many graph clustering algorithms,
including spectral clustering, try to optimize for this metric, the
resulting partitions often slice numerous smaller cuts off the graph,
which is not always useful. For our Rocketfuel data, we know that the
boundary of the network is imposed by the method of data collection. Thus,
by eliminating the boundary from graph clustering, we can more easily
find partitions that are more evenly balanced, and bottlenecks that are
closer to the core of the network.

To do this, we use standard spectral clustering techniques from
\cite{spectral1}, but instead of using the normalized graph Laplacian
$\mathcal{L}$, we use the truncated Dirichlet version $\mathcal{L}_D$. The
eigenvectors used for clustering will therefore not include components
for the degree-$1$ boundary nodes, but we can assign them to the same
side of the partition as their non-boundary neighbor nodes. Specifically,
we compute the first two eigenvectors of $\mathcal{L}_D$ and cluster the
nodes based on their components in these eigenvectors using $k$-means. For
each node, we compute the distance to both centers and sort the nodes
based on the difference. For a partition of size $k$, we take the top
$k$ nodes.

We follow the experiments of Leskovec et al. in \cite{lldm} by using
both traditional spectral clustering and Dirichlet spectral clustering
to find cuts of different sizes. Specifically, we find Dirichlet cuts
of all possible sizes, and then we find cuts using traditional spectral
clustering for those same sizes after adding boundary nodes back in. Thus,
for each network of $N$ nodes, we calculate $N-B$ cuts, where $B$ is
the number of boundary nodes.

For each cut, we measure the Cheeger ratio $h$ and the number of
components $c$. Ideally, a logical cut would split the network into
exactly $c=2$ components, but as Leskovec et al. demonstrated, as cut
size increases, spectral clustering and other algorithms that optimize
for $h$ yield cuts with many components.  This is precisely the problem
we are trying to avoid using Dirichlet clustering, and our results
show that Dirichlet clustering is effective in finding cuts with fewer
components. Furthermore, even though our algorithm is not specifically
optimizing for $h$, it does not find cuts that have significantly worse
values for $h$ while finding cuts with far fewer components.

\begin{table}[ht]
\begin{center}
\begin{tabular}{|c|c|c|c|c|r|r|r|r|}
\hline
& \multicolumn{4}{c|}{Number of cuts in each category:} & & & & \\
\cline{2-5}
& $c_D \leq c_T$ & $c_D \leq c_T$ & $c_D > c_T$ & $c_D > c_T$ & Avg & Avg & Avg & Avg \\
Dataset & $h_D \leq h_T$ & $h_D > h_T$ & $h_D \leq h_T$ & $h_D > h_T$ & $c_D - c_T$ & $h_D - h_T$ & $c_T$ & $h_T$ \\
\hline
1221 & 49 & 197 & 0 & 6 & -28.9 & 0.0506 & 36.8 & 0.0829 \\
1239 & 538 & 362 & 0 & 30 & -75.1 & 0.0127 & 83.4 & 0.1326 \\
1755 & 32 & 91 & 0 & 14 & -4.5 & 0.0545 & 7.9 & 0.1210 \\
2914 & 224 & 819 & 0 & 323 & -107.3 & 0.0565 & 125.8 & 0.1639 \\
3257 & 49 & 67 & 0 & 35 & -12.3 & 0.0370 & 20.0 & 0.1386 \\
3356 & 182 & 315 & 3 & 41 & -34.6 & 0.0388 & 45.6 & 0.1895 \\
3967 & 24 & 137 & 3 & 129 & -3.2 & 0.1423 & 9.2 & 0.1215 \\
4755 & 15 & 6 & 0 & 6 & -12.3 & -0.0970 & 15.4 & 0.3460 \\
6461 & 111 & 199 & 0 & 73 & -13.4 & 0.0148 & 19.7 & 0.0999 \\
7018 & 157 & 465 & 12 & 273 & -54.3 & 0.0403 & 81.4 & 0.0735 \\
\hline
\end{tabular}
\caption{Aggregate data comparing Dirichlet spectral clustering with traditional spectral clustering for
         several Rocketfuel datasets. For each dataset, we compute Dirichlet cuts of all possible sizes, and
         compare them with traditional spectral cuts with the same sizes. Smaller values of $c$ and $h$ are
         better. We classify the cuts into four categories, counting the number in each, and we also give the
         average difference in $h$ and $c$ between Dirichlet and traditional spectral clustering. The data shows
         that Dirichlet clustering finds cuts with many fewer components without significant adverse effects on
         the Cheeger ratio.}
\label{table}
\end{center}
\end{table}
%ON: This may have been answered some time, but doesn't this table invite
%the question of why 3 of the networks have been dropped? And as we discussed,
%is this the standard way to compare spectral decompositions, or would it be
%better to give the average ${c,h}_{D,T}$ in 4 columns (with fewer decimal
%places) so that people can see whether 21.8020 fewer components out of 
%XXX is significant or not.

We outline some aggregate data in Table \ref{table}. For several
datasets, we count the number of cuts in four different categories,
comparing the Dirichlet Cheeger ratio and number of components ($h_D$
and $c_D$) with traditional spectral clustering ($h_T$ and $c_T$). It is
evident that Dirichlet clustering finds cuts with fewer components than
traditional spectral clustering ($c_D \leq c_T$) for most cut sizes,
indicating that while spectral clustering optimizes for Cheeger ratio,
it often ``cheats'' by collecting whiskers as one cut. In addition,
despite the use of Cheeger ratio optimization, Dirichlet clustering
sometimes finds cuts with better Cheeger ratio as well. In the last
two columns for each dataset, we give the difference in $h$ and $c$
averaged out over all cut sizes. It turns out that the Cheeger ratios,
on average, are not drastically different between the two methods,
and Dirichlet clustering gives cuts with far fewer components.

Along with our aggregate data, we illustrate each individual cut for several of our
Rocketfuel datasets in Fig.\ \ref{fig:comparison}. (A few of the 
datasets were too large for accurate numerical computation.) For each cut size,
we plot a point corresponding to the difference in Cheeger ratio $h$
and the number of components $c$ between Dirichlet and traditional
spectral clustering.  It should be clear that for the majority of cut
sizes, Dirichlet clustering finds cuts with far fewer components, but
there is generally little change in Cheeger ratio. This can be seen
in the large variation on the $c$-axis with much smaller discrepancies
on the $h$-axis.  In other words, Dirichlet clustering avoids finding
``bags of whiskers'' while still maintaining good separation in terms
of $h$, despite not explicitly optimizing for $h$.

\begin{figure*}[htf]
\centering
\subfloat[Dataset 1221]{\label{fig:1221}\includegraphics[width=0.45\textwidth]{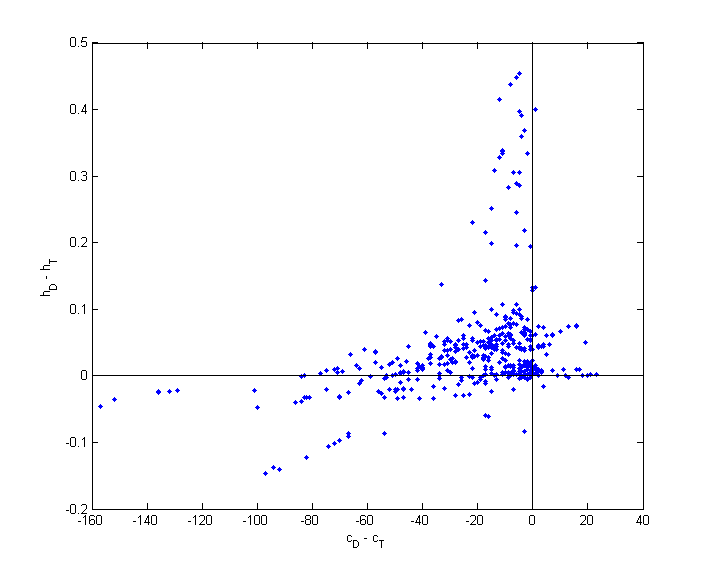}}
\subfloat[Dataset 1755]{\label{fig:1755}\includegraphics[width=0.45\textwidth]{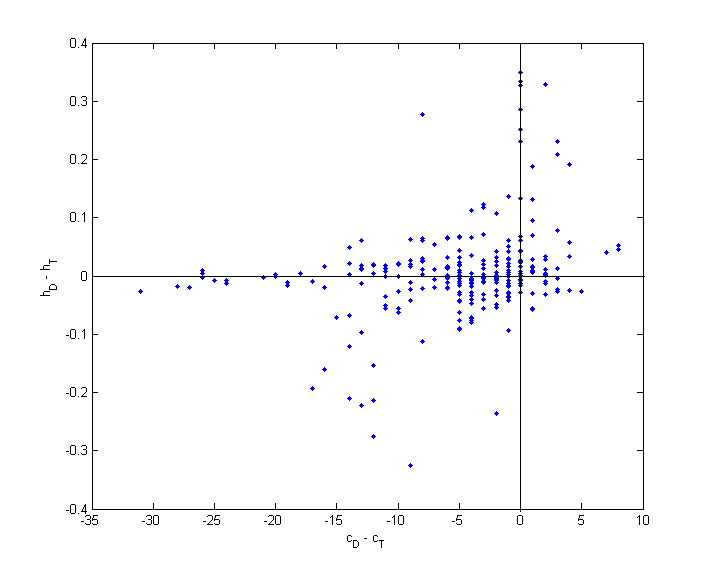}} \\
\subfloat[Dataset 3257]{\label{fig:3257}\includegraphics[width=0.45\textwidth]{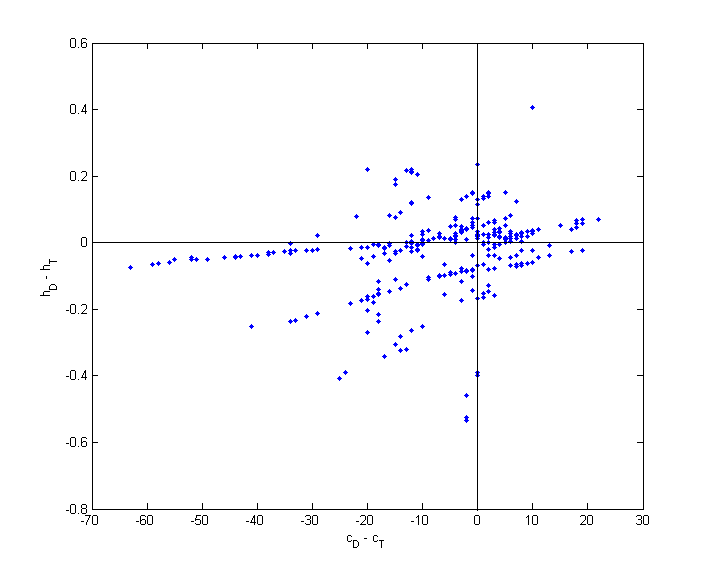}}
\subfloat[Dataset 3356]{\label{fig:3356}\includegraphics[width=0.45\textwidth]{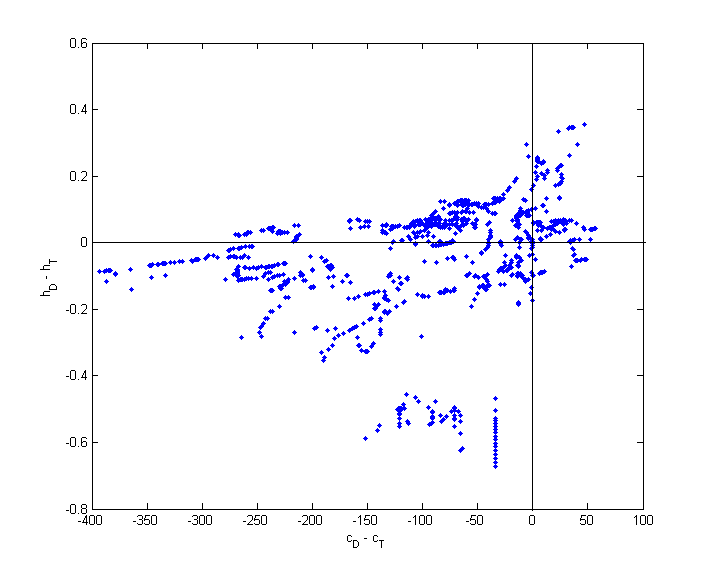}} \\
\subfloat[Dataset 3967]{\label{fig:3967}\includegraphics[width=0.45\textwidth]{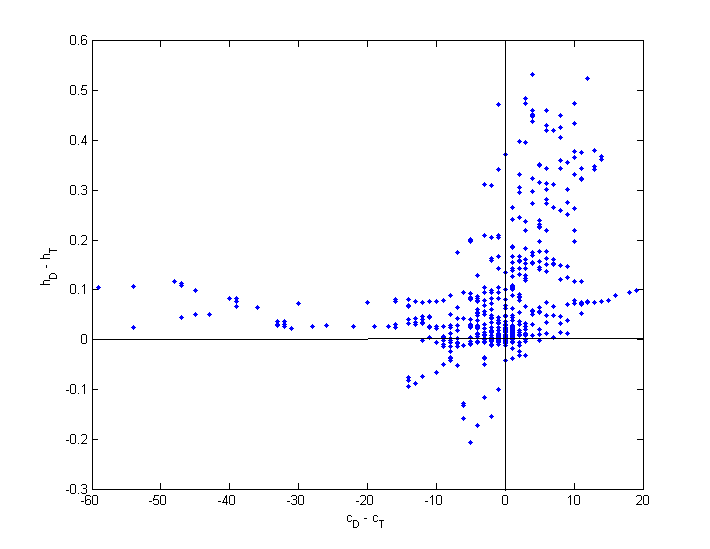}}
\subfloat[Dataset 6461]{\label{fig:6461}\includegraphics[width=0.45\textwidth]{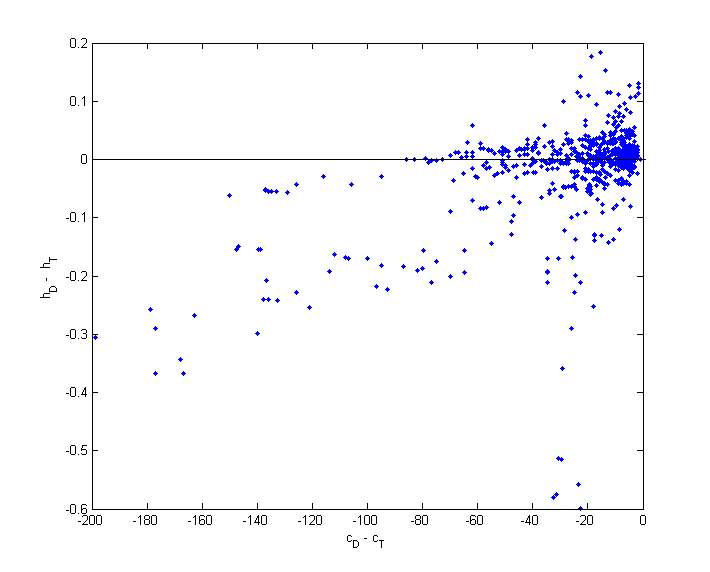}}
\caption{Comparison of Cheeger ratio $h$ and number of components $c$ for cuts for various datasets using
         Dirichlet ($D$) and traditional ($T$) spectral clustering. Each point represents one possible
         cut size; in general, Dirichlet clustering yields many fewer components without sacrificing much
         in Cheeger ratio.}
\label{fig:comparison}
\end{figure*}

It is clear that using Dirichlet eigenvalues improves the partition
by ignoring the boundary, alleviating the tendency to find ``bags of
whiskers'' without drastically changing the Cheeger ratio.  Although
traditional spectral clustering does not always fail, there is clear
evidence that Dirichlet spectral properties are an important tool in
the analysis of real-world networks.

\section{Discussion}
\label{conc}
Our results show evidence that eigenvalues of the graph Laplacian
can provide rich information about real-world networks when Dirichlet
boundary conditions are applied.  We find that the Dirichlet spectral
gap computed for several IP-layer networks is much larger than the
traditional spectral gap, and is likely to go to a finite limit as
the size of the network is increased.  Rigorous analysis for infinite
$d$-regular trees suggests that this may be the same as the spectral
gap of an infinite communications network. Spectral clustering using 
Dirichlet eigenvalues yields much better clustering than traditional
methods. 

The spectral decomposition using Dirichlet eigenvalues also suggests
a connection to large-scale negative curvature \cite{jlb,jlbb,ns}
in the Rocketfuel data. Traditional negatively curved graphs such
as trees and hyperbolic grids generally exhibit poor connectivity
and core congestion. Standard clustering often yields combinations
of smaller cuts near the periphery of the graph, but using Dirichlet
clustering, we can see that there tend to be bad larger-scale cuts
as well in the Rocketfuel datasets, in the graph interior.  The
presence of these larger-scale cuts is a hallmark of negative
curvature or hyperbolicity \cite{gromov}, suggesting that Dirichlet
spectral clustering may yield different behavior for hyperbolic and
flat networks. The hyperbolic grids themselves are also suitable
for further analysis, building from our study of regular trees.
Many properties such as the spectral gap remain open questions.

With some evidence of a connection between global negative curvature,
the spectral gap, and expansion, it would be interesting to empirically
compare the hyperbolicity $\delta$, the Cheeger constant $h$, and the
traditional and Dirichlet spectral gaps of Rocketfuel and other real-world
networks as well as well-known network models. From this, it could be
possible to classify various networks based on these properties.

\end{document}